\documentclass[12pt]{amsart}

\usepackage{amsfonts}
\usepackage{amsbsy}
\usepackage{amssymb}
\usepackage{hyperref}

\textwidth=16.0cm \oddsidemargin=0.2cm \evensidemargin=0.2cm
\topmargin=-1.0cm \textheight=22.5cm

\setlength{\textfloatsep}{0.5em plus 0.1em minus 0.1em}
\setlength{\floatsep}{0.5em plus 0.1em minus 0.1em}
\setlength{\intextsep}{0em plus 0.1em minus 0.1em}

\newcommand{\mb}{\mathbb}
\newcommand{\mf}{\mathfrak}

\renewcommand{\ge}{\geqslant}
\renewcommand{\le}{\leqslant}

\newcommand{\Ker}{\operatorname{Ker}}
\renewcommand{\Im}{\operatorname{Im}}

\newcommand{\codim}{\operatorname{codim}}
\newcommand{\SL}{\operatorname{SL}}
\newcommand{\GL}{\operatorname{GL}}

\newcommand{\Spin}{\operatorname{Spin}}

\newcommand{\diag}{\operatorname{diag}}

\newtheorem{theorem}{Theorem}

\newtheorem{proposition}{Proposition}
\newtheorem{lemma}{Lemma}
\newtheorem{corollary}{Corollary}

\newtheorem*{question*}{Question}

\theoremstyle{definition}

\newtheorem*{dfn*}{Definition}

\theoremstyle{remark}

\newtheorem{remark}{Remark}

\begin{document}

\title[An epimorphic subgroup arising from Roberts' counterexample]{An
epimorphic subgroup arising from\\Roberts' counterexample}

\author{Roman Avdeev}

\thanks{Partially supported by Russian Foundation for Basic Research, grant no. 09-01-00648}

\address{Chair of Higher Algebra, Department of Mechanics and Mathematics,
Moscow State University, 1, Leninskie Gory, Moscow, 119992, Russia}

\email{suselr@yandex.ru}

\date{\today}

\subjclass[2010]{14E25, 14M99, 14M17}

\keywords{Algebraic group, epimorphic subgroup, projective
embedding, Hilbert's fourteenth problem}

\begin{abstract}
In 1994, based on Roberts' counterexample to Hilbert's fourteenth
problem, A'Campo-Neuen constructed an example of a linear action of
a 12-dimen\-si\-onal commutative unipotent group $H_0$ on a
19-dimensional vector space $V$ such that the algebra of invariants
$\Bbbk[V]^{H_0}$ is not finitely generated. We consider a certain
extension $H$ of $H_0$ by a one-dimensional torus and prove that $H$
is epimorphic in $\SL(V)$. In particular, the homogeneous space
$\SL(V)/H$ provides a new example of a homogeneous space with
epimorphic stabilizer that admits no projective embeddings with
small boundary.
\end{abstract}

\maketitle

\section{Introduction}

We work over an algebraically closed field $\Bbbk$ of characteristic
zero. Throughout the paper all topological terms relate to the
Zarisky topology, all groups are supposed to be algebraic and their
subgroups closed.

Let $G$ be a connected affine algebraic group and $H$ a subgroup of
it. We recall that a \emph{projective embedding with small boundary}
of the homogeneous space $G/H$ is an open $G$-equivariant embedding
$\rho\colon G/H \hookrightarrow X$, where $X$ is an irreducible
normal projective $G$-variety and $\codim_X(X \backslash \rho(G/H))
\ge 2$. For a given homogeneous space $G/H$, the existence of such
embedding implies that the algebra $\Bbbk[G/H]$ of regular functions
on $G/H$ consists of constants, that is, $\Bbbk[G/H] = \Bbbk$. A
subgroup $H \subset G$ with $\Bbbk[G/H] = \Bbbk$ is said to be
\emph{epimorphic}. Various characterizations, properties, and
examples of epimorphic subgroups, as well as several conjectures and
open problems concerning them, can be found in~\cite{BBI},
\cite{BBII}, \cite{BBK}, and~\cite[\S\,23\,B]{Gr}.

It turns out that not every homogeneous space $G/H$ with epimorphic
$H$ admits a projective embedding with small boundary. A criterion
for this is given by Theorem~\ref{criterion} below. To formulate
this theorem, we need to recall some additional notions. A subgroup
$H \subset G$ is said to be \emph{observable} if $G/H$ is a
quasi-affine variety. An observable subgroup $H \subset G$ is said
to be a \emph{Grosshans subgroup} if the algebra $\Bbbk[G/H]$ is
finitely generated over~$\Bbbk$.

\begin{theorem}\label{criterion}
Let $H \subset G$ be an epimorphic subgroup. Then the following
conditions are equivalent:

\textup{(a)} $G/H$ admits a projective embedding with small
boundary;

\textup{(b)} there is a character $\chi$ of $H$ such that $\Ker
\chi$ is a Grosshans subgroup in $G$.
\end{theorem}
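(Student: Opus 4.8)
The plan is to set up a correspondence between projective embeddings of $G/H$ with small boundary and affine ``cones'' that are closures of orbits of the form $G/\Ker\chi$, and then to match conditions (a) and (b) through this correspondence. I would treat the two implications separately, each time passing between a projective $G$-variety and an affine $G$-variety carrying a scaling $\mb G_m$-action, and keeping careful track of codimensions.

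For (a)$\Rightarrow$(b), suppose $\rho\colon G/H\hookrightarrow X$ is such an embedding. Since $X$ is a normal projective $G$-variety and $G$ is connected, Sumihiro's theorem gives an ample line bundle $\mathcal L$ on $X$ admitting a $G$-linearization; I would form the affine cone $\widehat X=\operatorname{Spec}\bigoplus_{n\ge 0}\Gamma(X,\mathcal L^n)$, a $G$-variety with a scaling $\mb G_m$-action and an equivariant map $\widehat X\setminus\{0\}\to X$. Writing $x_0=\rho(eH)$ and lifting it to a vector $v_0$ in the fiber line $\ell_0\subset\widehat X$ over $x_0$, I let $\chi\colon H\to\mb G_m$ be the character by which $H=G_{x_0}$ acts on $\ell_0$, so that $H':=\Ker\chi=G_{v_0}$ and $G\cdot v_0\cong G/H'$ is a quasi-affine orbit in a $G$-module; hence $H'$ is observable. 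The crucial point is that $\chi$ is non-torsion: the class of $\chi$ in $\operatorname{Pic}(G/H)\cong\operatorname{Cl}(X)$ (using that the boundary has codimension $\ge 2$) equals the class of the ample $\mathcal L$, which cannot be torsion once $\dim X\ge 1$. Thus $\chi(H)=\mb G_m$, so $G\cdot v_0$ meets the generic fibre of $\widehat X\setminus\{0\}\to X$ in a full punctured line and is therefore dense, hence open, in $\widehat X$. (If $\dim X=0$ then $G/H$ is a point and (b) is immediate.)

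Continuing (a)$\Rightarrow$(b): by homogeneity $G\cdot v_0$ fills the punctured fibres over all of $\rho(G/H)$, so $\widehat X\setminus(G\cdot v_0)$ is the union of the vertex and the cone over the boundary $X\setminus\rho(G/H)$. The vertex has codimension $\dim X+1\ge 2$, and the cone over a set of codimension $\ge 2$ in $X$ again has codimension $\ge 2$ in $\widehat X$. After replacing $\mathcal L$ by a sufficiently high power (so that the section ring is integrally closed) $\widehat X$ is normal, and regular functions extend across the codimension-$\ge 2$ complement, giving $\Bbbk[G/H']=\Bbbk[\widehat X]$, which is finitely generated; hence $H'=\Ker\chi$ is a Grosshans subgroup, as required. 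For the converse (b)$\Rightarrow$(a), given $\chi$ with $H'=\Ker\chi$ Grosshans I set $\widehat Y=\operatorname{Spec}\Bbbk[G/H']$: by Grosshans' theorem $G/H'$ is open in the normal affine $G$-variety $\widehat Y$ with complement of codimension $\ge 2$. The torus $S=H/H'\cong\mb G_m$ acts on $\widehat Y$ commuting with $G$, freely on $G/H'$ with quotient $G/H$, and grades $A=\Bbbk[G/H']=\bigoplus_n A_n$ with $A_0=\Bbbk[G/H]=\Bbbk$ because $H$ is epimorphic. If the grading is two-sided, comparing degrees produces a homogeneous unit of nonzero degree and shows $A$ is finite over $\Bbbk[f,f^{-1}]$, whence $\dim\widehat Y=1$ and $G/H$ is a point; otherwise, after replacing $\chi$ by $-\chi$ if needed, $A=\bigoplus_{n\ge 0}A_n$ and I take $X=\operatorname{Proj}A$, a normal irreducible projective $G$-variety of dimension $\dim G/H$.

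To finish (b)$\Rightarrow$(a): since $A_0=\Bbbk$ and the grading is nonnegative, every positive-degree element vanishes at an $S$-fixed point, so the only $S$-fixed point of $\widehat Y$ is the vertex cut out by the irrelevant ideal. The canonical map $\widehat Y\setminus\{\text{vertex}\}\to X$ is a good $\mb G_m$-quotient, geometric and free along the $S$-stable open subset $G/H'$, and it identifies $G/H=(G/H')/S$ with an open subvariety of $X$. The boundary $X\setminus(G/H)$ is the image of $(\widehat Y\setminus G/H')\setminus\{\text{vertex}\}$; as this set is $S$-stable, contains no $S$-fixed points, and has codimension $\ge 2$ in $\widehat Y$, its image has codimension $\ge 2$ in $X$, so $\rho\colon G/H\hookrightarrow X$ is a projective embedding with small boundary. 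I expect the main obstacle to be the bookkeeping of codimensions through these cone and quotient constructions: specifically, guaranteeing in the first implication that the character is non-torsion (so that $G\cdot v_0$ is genuinely dense in $\widehat X$) and ensuring in the second that no stray $S$-fixed points inflate the boundary, together with the normality adjustments (passing to a high power of $\mathcal L$ and invoking Grosshans' codimension-two theorem).
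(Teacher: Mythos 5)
Your proposal is correct in substance, but note that the paper itself gives no proof of Theorem~\ref{criterion}: it only refers to \cite[Theorem~1.1]{Ar}, and your cone-and-$\operatorname{Proj}$ argument is essentially the proof given there (the affine cone over a $G$-linearized ample bundle produces the non-torsion character $\chi$ with $G\cdot v_0$ dense, and $\operatorname{Proj}\Bbbk[G/\Ker\chi]$ together with Grosshans' codimension-two theorem produces the embedding, with your fixed-point and codimension bookkeeping matching the standard argument). The only point you gloss over is the degenerate possibility in (b)$\Rightarrow$(a) that $\chi$ is a torsion character, so that $H/\Ker\chi$ is finite rather than $\mb G_m$; this is harmless, since then every element of $\Bbbk[G/\Ker\chi]$ is integral over the invariants $\Bbbk[G/H]=\Bbbk$ of the finite group $H/\Ker\chi$, forcing $\Bbbk[G/\Ker\chi]=\Bbbk$, hence $G/\Ker\chi$ is a point, $H=G$, and (a) holds trivially.
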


A complete proof of this theorem can be found
in~\cite[Theorem~1.1]{Ar}.

Under a certain assumption on $H$, a combinatorial classification of
all projective embeddings with small boundary for a given
homogeneous space $G/H$ is obtained in~\cite[\S\,3]{Ar}. Another
problem arising in connection with Theorem~\ref{criterion} is to
construct examples of epimorphic subgroups $H \subset G$ such that
the corresponding homogeneous spaces $G/H$ admit no projective
embeddings with small boundary. In view of condition~(b) of
Theorem~\ref{criterion}, such examples should be based on examples
of observable subgroups $H_0 \subset G$ such that the algebra
$\Bbbk[G/H_0]$ is not finitely generated over $\Bbbk$. In their
turn, examples of this kind are provided by linear counterexamples
to Hilbert's fourteenth problem.

We recall that, in general, Hilbert's fourteenth problem asks
whether for a subfield $L$ of the field $K(x_1, \ldots, x_n)$ of
rational functions in $n$ variables over a field $K$ such that $L
\supset K$ the algebra $L \cap K[x_1, \ldots, x_n]$ is finitely
generated over $K$. (Here $K[x_1, \ldots, x_n]$ is the algebra of
polynomials in $x_1, \ldots, x_n$.) An important special case of
this problem asks whether the algebra $K[x_1, \ldots, x_n]^G$ of
invariants of a linear action of a group $G$ on an $n$-dimensional
vector space is finitely generated over $K$. (This special case is
obtained from the general one with $L$ being the quotient field of
$K[x_1, \ldots, x_n]^G$.) Counterexamples to this special case of
Hilbert's fourteenth problem are called linear counterexamples.

At this moment, few counterexamples to Hilbert's fourteenth problem
are known. The first one, which turns out to be linear, was
discovered by Nagata in~1958~\cite{N}. In Nagata's example, a
13-dimensional unipotent group acts on a 32-dimensional vector
space. Much later, in 1997, Nagata's counterexample was considerably
simplified by Steinberg \cite{St} whose counterexample is now known
as Nagata-Steinberg's counterexample. In this example, the dimension
of the subgroup is reduced to~$6$ and that of the vector space to
$18$. The second counterexample, which is not linear, was
con\-structed in~1990 by Roberts~\cite{R} who used an approach
completely different from that of~Nagata. In 1994, based on Roberts'
counterexample, A'Campo-Neuen~\cite{A} constructed a linear
counterexample involving an action of a $12$-dimensional unipotent
group on a $19$-dimensional vector space. Subsequently, this
counterexample was followed by a series of linear counterexamples
(see, for instance,~\cite{Fr} and references therein). We note that
among linear counterexamples to Hilbert's fourteenth problem the key
role is played by counterexamples involving linear actions of
unipotent groups.

A natural way of obtaining examples of homogeneous spaces with
epimorphic stabilizer that admit no projective embeddings with small
boundary is as follows. First, one takes a linear counterexample to
Hilbert's fourteenth problem involving an action of a unipotent
group $H_0$ on a vector space $V$ over $\Bbbk$. Second, one fixes a
connected reductive group $G \subset \GL(V)$ containing $H_0$.
Third, one chooses an appropriate one-dimensional torus $S \subset
G$ normalizing $H_0$ such that the subgroup $H = S H_0$ is
epimorphic in~$G$. In this situation, the homogeneous space $G/H$
admits no projective embeddings with small boundary, see
Proposition~\ref{proposition} in Section~\ref{construction}.

The first example of a homogeneous space with epimorphic stabilizer
that admits no projective embeddings with small boundary was
mentioned in~\cite[7(b)]{BBII}. In this example, $G = \SL_2 \times
\ldots \times \SL_2$ ($16$ copies) and $H$ is obtained by extending
the group in Nagata's counter\-example by a one-dimensional torus.
An analogous example based on Nagata-Steinberg's counterexample was
considered (with complete proofs) in~\cite[\S\,2]{Ar}. In the
present paper, we construct a new example of that kind based on
A'Campo-Neuen's counter\-example. The precise formulations and
construction are given in Section~\ref{construction}.

In connection with these results the following question may be of
interest.

\begin{question*}
Let a unipotent group $H_0$ act linearly on a finite-dimensional
vector space $V$ and let $G$ be a connected reductive subgroup of\,
$\GL(V)$ containing $H_0$. Suppose that the algebra $\Bbbk[V]^{H_0}$
is not finitely generated over $\Bbbk$. Is there a one-dimensional
torus $S \subset G$ normalizing $H_0$ such that the group $H = S
H_0$ is epimorphic in~$G$?
\end{question*}

\section{Construction of the subgroup}\label{construction}

We put $G = \SL_{19}$ and denote by $V$ the space of the
tautological representation of $G$. We fix a basis $e_1, e_2,
\ldots, e_{19}$ in $V$. Further on, for any element of $G$, its
matrix is considered with respect to this basis.

Let $\mb G_a$ be the additive group of $\Bbbk$.

We consider a subgroup $H_0\simeq (\mb G_a)^{12}$ embedded in $G$ as
follows:
$$
\overline \mu = (\mu_0, \mu_1, \ldots, \mu_{11}) \mapsto h(\overline
\mu) =
\begin{pmatrix} E_4 & 0 \\ M(\overline \mu) & E_{15} \end{pmatrix},
$$
where $E_4$, $E_{15}$ are the identity matrices of order $4$, $15$,
respectively, and

\begin{equation} \label{M}
M(\overline \mu) =
\begin{pmatrix}
\mu_1 & \mu_0 & 0 & 0\\
\mu_2 & \mu_1 & 0 & 0\\
0 & \mu_2 & 0 & 0\\
\mu_3 & 0 & \mu_0 & 0\\
\mu_4 & 0 & \mu_3 & 0\\
0 & 0 & \mu_4 & 0\\
\mu_5 & 0 & 0 & \mu_0\\
\mu_6 & 0 & 0 & \mu_5\\
0 & 0 & 0 & \mu_6\\
\mu_7 & \mu_0 & 0 & 0\\
\mu_8 & \mu_7 & 0 & 0\\
\mu_9 & 0 & \mu_8 & 0\\
\mu_{10} & 0 & \mu_9 & 0\\
\mu_{11} & 0 & 0 & \mu_{10}\\
0 & 0 & 0 & \mu_{11}\\
\end{pmatrix}.
\end{equation}

The result of A'Campo-Neuen is as follows.

\begin{theorem}[\cite{A}]
The algebra $\Bbbk[V]^{H_0}$ is not finitely generated over~$\Bbbk$.
\end{theorem}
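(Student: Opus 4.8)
The plan is to reduce the non-finite generation of $\Bbbk[V]^{H_0}$ to Roberts' original counterexample~\cite{R}, by localizing at the invariant coordinate $x_1$ and identifying the resulting invariant algebra with a Laurent extension of the kernel of Roberts' locally nilpotent derivation. First I would pass from the group to its Lie algebra: differentiating the one-parameter subgroups $\mu_i$ at the identity yields twelve commuting locally nilpotent derivations $\partial_0, \ldots, \partial_{11}$ of $\Bbbk[V] = \Bbbk[x_1, \ldots, x_{19}]$, read off column by column from~\eqref{M}, with $\Bbbk[V]^{H_0} = \bigcap_{i=0}^{11}\Ker\partial_i$. By inspection $x_1, x_2, x_3, x_4$ lie in every $\Ker\partial_i$, and each $\partial_i$ lowers the degree in $x_5, \ldots, x_{19}$ by one. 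The key structural remark is that the parameter $\mu_0$ appears in all four blocks of~\eqref{M} while $\mu_1, \ldots, \mu_{11}$ are each confined to a single block; so I would separate $\mu_0$ off, letting $H_0' \cong (\mb G_a)^{11}$ be the subgroup of $\mu_1, \ldots, \mu_{11}$, and write $\Bbbk[V]^{H_0} = \Ker\bigl(\partial_0|_{\Bbbk[V]^{H_0'}}\bigr)$.

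The heart of the computation is $\Bbbk[V]^{H_0'}$ after inverting $x_1$. Because the private parameters decouple the blocks, the $H_0'$-action on $\{x_1 \ne 0\}$ is free and splits as a product, so each block contributes exactly one invariant. The three length-three blocks (rows $1$--$3$, $4$--$6$, $7$--$9$) give $f_1 = x_2^2 x_5 - x_1 x_2 x_6 + x_1^2 x_7$ and the analogous $f_2, f_3$ in $(x_1,x_3; x_8,x_9,x_{10})$ and $(x_1,x_4; x_{11},x_{12},x_{13})$, while the long block (rows $10$--$15$) gives a single invariant $f_4 = x_1^5 x_{19} - x_1^4 x_4 x_{18} + \cdots - x_2 x_3^2 x_4^2 x_{14}$. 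Comparing transcendence degrees ($19 - 11 = 8$) I would then verify that these, together with $x_1^{\pm 1}, x_2, x_3, x_4$, are algebraically independent and generate everything, obtaining
$$(\Bbbk[V]_{x_1})^{H_0'} = \Bbbk[x_1^{\pm 1}] \otimes_\Bbbk \Bbbk[x_2, x_3, x_4, f_1, f_2, f_3, f_4].$$

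It then remains to evaluate $\partial_0$ on the generators. One finds $\partial_0 x_i = 0$ for $i \le 4$ and $\partial_0 f_1 = x_2^3$, $\partial_0 f_2 = x_3^3$, $\partial_0 f_3 = x_4^3$, $\partial_0 f_4 = -(x_2 x_3 x_4)^2$. Setting $X_j = x_{j+1}$, $Y_j = f_j$, $T = -f_4$, this is exactly Roberts' derivation $D$, determined by $D X_j = 0$, $D Y_1 = X_1^3$, $D Y_2 = X_2^3$, $D Y_3 = X_3^3$, $D T = (X_1 X_2 X_3)^2$. Since $\partial_0$ is $\Bbbk[x_1^{\pm 1}]$-linear, passing to kernels gives
$$(\Bbbk[V]^{H_0})_{x_1} = \Bbbk[x_1^{\pm 1}] \otimes_\Bbbk \Ker D,$$
where $\Ker D$ is Roberts' non-finitely-generated algebra. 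If $\Bbbk[V]^{H_0}$ were finitely generated, so would be its localization at $x_1$, and hence so would be the retract $\Ker D$ obtained by the substitution $x_1 = 1$ (a quotient of a finitely generated algebra is finitely generated) --- contradicting Roberts' theorem.

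The step I expect to be the main obstacle is the completeness assertion in the second paragraph: that the explicitly exhibited invariants really exhaust $(\Bbbk[V]_{x_1})^{H_0'}$, so that the identification with a Laurent extension of a polynomial ring is an equality and not merely an inclusion. This is where freeness of the $H_0'$-action on $\{x_1 \ne 0\}$, the block decomposition, and normality of the quotient must be invoked; once it is established, the matching of $\partial_0$ with Roberts' derivation and the descent of finite generation along the localization are routine.
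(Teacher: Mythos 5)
This theorem is nowhere proved in the paper: it is imported from A'Campo-Neuen~\cite{A} with a bare citation, so there is no internal proof to compare against; what you have written is essentially a reconstruction of the argument of~\cite{A} itself, whose whole point is exactly this reduction of the linear action to Roberts' counterexample. Your computations check out: with the derivations read off from~(\ref{M}) one has $\partial_j f_i = 0$ for $j = 1, \ldots, 11$ for your $f_1 = x_2^2x_5 - x_1x_2x_6 + x_1^2x_7$, its analogues $f_2, f_3$, and $f_4 = x_1^5x_{19} - x_1^4x_4x_{18} + x_1^3x_4^2x_{17} - x_1^2x_3x_4^2x_{16} + x_1x_3^2x_4^2x_{15} - x_2x_3^2x_4^2x_{14}$, while $\partial_0$ sends $(f_1, f_2, f_3, f_4)$ to $(x_2^3,\, x_3^3,\, x_4^3,\, -(x_2x_3x_4)^2)$, which is indeed Roberts' derivation with exponent $t = 2$ from~\cite{R}. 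The step you flag as the main obstacle closes more easily than you anticipate, by a slice argument rather than any appeal to normality of a quotient: each of $\mu_1, \ldots, \mu_{11}$ occurs in the \emph{first} column of $M(\overline\mu)$, so on $\lbrace x_1 \ne 0\rbrace$ one can uniquely and polynomially (in the coordinates and $x_1^{-1}$) normalize the eleven coordinates $x_5, x_6, x_8, x_9, x_{11}, x_{12}, x_{14}, \ldots, x_{18}$ to zero; hence the action map $H_0' \times Z \to \lbrace x_1 \ne 0 \rbrace$, with $Z$ the resulting affine slice, is an isomorphism of varieties, and the pullbacks of the slice coordinates $x_1^{\pm 1}, x_2, x_3, x_4, x_7, x_{10}, x_{13}, x_{19}$ are exactly $x_1^{\pm 1}, x_2, x_3, x_4, f_1/x_1^2, f_2/x_1^2, f_3/x_1^2, f_4/x_1^5$, which gives your equality $(\Bbbk[V]_{x_1})^{H_0'} = \Bbbk[x_1^{\pm 1}] \otimes_\Bbbk \Bbbk[x_2, x_3, x_4, f_1, f_2, f_3, f_4]$ on the nose. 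The remaining reductions are correct as stated: localization at the $H_0$-invariant $x_1$ commutes with taking invariants, $\partial_0$ is $\Bbbk[x_1^{\pm 1}]$-linear on this algebra, and evaluation at $x_1 = 1$ retracts the Laurent extension onto $\Ker D$, so finite generation of $\Bbbk[V]^{H_0}$ would contradict Roberts' theorem (valid over any field of characteristic zero, consistent with the Remark following the statement). In short: your proposal is correct modulo the routine slice lemma just described, and it follows the same route as the cited source rather than a genuinely different one.
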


\begin{remark}
In~\cite{A} this theorem is proved for any ground field of
characteristic zero.
\end{remark}

To construct our example, we consider the one-dimensional torus
\begin{equation}\label{torus}
S = \lbrace\diag(s^{15}, s^{15}, s^{15}, s^{15}, \underbrace{s^{-4},
\ldots, s^{-4}}_{15})\mid s\in\Bbbk^\times\rbrace \subset G,
\end{equation}
where $\Bbbk^\times = \Bbbk \backslash \{0\}$ is the multiplicative
group of~$\Bbbk$. Clearly, $S$ normalizes $H_0$. We now put $H = S
H_0$. The main result of this paper is given by the following
theorem.

\begin{theorem}\label{main_theorem}
The subgroup $H$ is epimorphic in $G$.
\end{theorem}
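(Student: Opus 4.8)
My plan is to translate epimorphicity into a statement about irreducible representations and then to exploit the torus $S$ together with the explicit matrix $M$. Write $\mathfrak g=\operatorname{Lie}G$ and $\mathfrak h=\operatorname{Lie}H$, and for a dominant weight $\mu$ denote by $V(\mu)$ the irreducible $G$-module with highest weight $\mu$. Since $G$ is reductive, the Peter--Weyl decomposition $\Bbbk[G]\cong\bigoplus_{\mu}V(\mu)\otimes V(\mu)^{*}$, with $H$ acting through the right factor, gives $\Bbbk[G/H]=\Bbbk[G]^{H}\cong\bigoplus_{\mu}V(\mu)\otimes\bigl(V(\mu)^{*}\bigr)^{H}$. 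Consequently $H$ is epimorphic if and only if $V(\mu)^{H}=0$ for every nonzero dominant weight $\mu$, and this is the statement I would aim to prove.

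First I would extract as much as possible from $S$. Its Lie algebra is spanned by $\xi=\diag(15,15,15,15,-4,\dots,-4)$, which grades $\mathfrak g$ as $\mathfrak g=\mathfrak g_{-19}\oplus\mathfrak g_{0}\oplus\mathfrak g_{19}$, where $\mathfrak g_{0}=\mathfrak s(\mathfrak{gl}_{4}\oplus\mathfrak{gl}_{15})$ is block diagonal, $\mathfrak g_{-19}=\operatorname{Hom}(V_{+},V_{-})$ is the lower-left block (so that $\mathfrak h_{0}\subset\mathfrak g_{-19}$), and $\mathfrak g_{19}$ is the upper-right block; here $V_{+}=\langle e_{1},\dots,e_{4}\rangle$ and $V_{-}=\langle e_{5},\dots,e_{19}\rangle$. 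Every positive root has $S$-weight $0$ or $19$, so all $S$-weights occurring in $V(\mu)$ are congruent modulo $19$ to the $S$-weight of $\mu$. Because $\xi\in\mathfrak g_{0}$ and $\mathfrak h_{0}\subset\mathfrak g_{-19}$, any $v\in V(\mu)^{H}$ must lie in the zero $S$-weight space $V(\mu)_{0}$ and satisfy $\mathfrak h_{0}v=0$. A short weight count then shows $V(\mu)_{0}=0$ unless $\mu$ lies in the root lattice (equivalently $\sum_{i}i\,c_{i}\equiv0\pmod{19}$ when $\mu=\sum_{i}c_{i}\omega_{i}$); in particular $V(\omega_{k})_{0}=0$ for every fundamental weight, so $S$ already disposes of all $\wedge^{k}V$. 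It therefore remains to rule out $H$-invariants in $V(\mu)$ for nonzero $\mu$ in the root lattice.

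For such $\mu$ the torus gives nothing and one must use $H_{0}$, i.e.\ show that no nonzero $v\in V(\mu)_{0}$ is annihilated by $\mathfrak h_{0}$. For the adjoint module $V(\omega_{1}+\omega_{18})=\mathfrak g$ this is the centralizer statement $\mathfrak z_{\mathfrak g_{0}}(\mathfrak h_{0})=0$: the only block-diagonal traceless pair $(A,B)$ with $B\,M(\overline\mu)=M(\overline\mu)\,A$ for all $\overline\mu$ should be $0$, a finite check that hinges on the precise pattern of repeated entries in $M$. The difficulty is to make such a computation uniform over the infinitely many root-lattice weights, and to organize this I would pass to the epimorphic closure $\overline{\mathfrak h}=\{X\in\mathfrak g:Xf=0\text{ for all }f\in\Bbbk[G]^{H}\}$, the Lie algebra of the largest subgroup sharing the invariants of $H$; one checks directly that $\overline{\mathfrak h}$ is a subalgebra and that $H$ is epimorphic if and only if $\overline{\mathfrak h}=\mathfrak g$. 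As $\overline{\mathfrak h}\supseteq\mathfrak h$ is stable under $\operatorname{ad}\xi$, it is graded, $\overline{\mathfrak h}=\overline{\mathfrak h}_{-19}\oplus\overline{\mathfrak h}_{0}\oplus\overline{\mathfrak h}_{19}$, with $\mathfrak h_{0}\subseteq\overline{\mathfrak h}_{-19}$ and $\xi\in\overline{\mathfrak h}_{0}$. The decisive step --- and the one I expect to be the main obstacle --- is to break the confinement $\mathfrak h\subset\mathfrak g_{0}\oplus\mathfrak g_{-19}$ by producing a single nonzero element of $\overline{\mathfrak h}_{19}$; once this is available, bracketing it with $\mathfrak h_{0}$ and with $\xi$ should force $\overline{\mathfrak h}_{0}=\mathfrak g_{0}$ and then $\overline{\mathfrak h}_{\pm19}=\mathfrak g_{\pm19}$, giving $\overline{\mathfrak h}=\mathfrak g$. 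Producing that positive-degree element is exactly the point at which the special combinatorics of $M$ --- the very structure underlying A'Campo-Neuen's non-finite-generation result --- has to be exploited, and it is where the real work of the proof lies.
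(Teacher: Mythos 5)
Your reduction and your use of the torus are sound: epimorphicity of $H$ is indeed equivalent to $V(\mu)^H=0$ for every nonzero dominant $\mu$, the grading $\mathfrak g=\mathfrak g_{-19}\oplus\mathfrak g_0\oplus\mathfrak g_{19}$ under $\operatorname{ad}\xi$ is correct, and the congruence argument correctly disposes of every $\mu$ outside the root lattice, in particular of all $\bigwedge^k V$. But your proof stops exactly where the content of the theorem begins. For the infinitely many nonzero root-lattice weights --- starting with the adjoint module --- you prove nothing: you reformulate the problem in terms of the closure $\overline{\mathfrak h}$ and then state that the decisive step is to exhibit a nonzero element of $\overline{\mathfrak h}_{19}$, yourself flagging this as ``where the real work of the proof lies'' without doing it. Moreover, even granting such an element $Y$, your assertion that bracketing $Y$ with $\mathfrak h_0$ and $\xi$ ``should force'' $\overline{\mathfrak h}=\mathfrak g$ is unverified: a subalgebra generated by $\mathfrak h$ and a single element of $\mathfrak g_{19}$ need not a priori be all of $\mathfrak g$ (note $\dim\mathfrak h_0=12$ while $\dim\mathfrak g_{\pm 19}=60$), so this step too would require the specific combinatorics of $M(\overline\mu)$. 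As written, the proposal is a correct elimination of the easy weights together with a research plan for the hard ones; it is not a proof.

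For comparison, the paper sidesteps the root-lattice modules entirely. By \cite[Lemma~23.5]{Gr}, $H$ is epimorphic if and only if no proper observable subgroup of $G$ contains it, and by \cite[Lemma~7.7]{Gr} this reduces to two checkable conditions: (1) $H$ fixes no highest weight vector of any nontrivial simple $G$-module with respect to any Borel subgroup --- per module a much weaker statement than $V(\lambda)^H=0$, which is why it can be verified: after a Bruhat decomposition $g_0=u\sigma b$, a torus computation (your congruence argument appears there in the form $b_j=15k_j-4l_j\ne 0$) yields an index $i^*$ with $l^*<15k^*/4$, and then a finite count of nonzero rows in the matrix $D(h_0)=Q^{-1}M(\overline\mu)P$ (the quantities $Z(n)$, which is where the repeated-entry pattern of $M$ actually enters) produces a contradiction; and (2) no proper reductive subgroup contains $H$, proved by showing via an explicit computation of the maps $\varphi_w$ that any reductive $F\supseteq H$ acts irreducibly on $V$, and then invoking the classification of simple groups admitting a $19$-dimensional irreducible module. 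If you wish to salvage your direct approach, the missing ingredient is precisely an analogue of this finite combinatorial analysis of $M(\overline\mu)$; without it, Theorem~\ref{main_theorem} is not established.
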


This theorem will be proved in Section~\ref{main_theorem}.

\begin{corollary}
The homogeneous space $G/H$ admits no projective embeddings with
small boundary.
\end{corollary}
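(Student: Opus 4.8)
The plan is to derive this from Theorem~\ref{main_theorem} by means of Theorem~\ref{criterion}. Since $H$ is epimorphic, Theorem~\ref{criterion} reduces the claim to showing that condition~(b) fails, i.e.\ that $\Ker\chi$ is not a Grosshans subgroup of $G$ for every character $\chi$ of $H$. The first step is to describe $\mf X(H)$. As $H_0 \cong (\mb G_a)^{12}$ is unipotent it carries no nontrivial characters, and it is normal in $H$ with $H/H_0 \cong S \cong \mb G_m$; hence restriction identifies $\mf X(H)$ with $\mf X(S) \cong \mb Z$, every $\chi$ is trivial on $H_0$, and $H_0 \subseteq \Ker\chi \subseteq H$. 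Writing $\chi_0$ for the generator of $\mf X(H)$ obtained from an isomorphism $S \to \mb G_m$, we get $\Ker\chi_0 = H_0$, while $\Ker(\chi_0^N) = \mu_N H_0$ for $N \ge 1$ (here $\mu_N \subset S$ is the group of $N$-th roots of unity) and $\Ker\chi = H$ for $\chi = 1$. Thus it suffices to treat two cases.

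Case $\chi = 1$: here $\Ker\chi = H$. A subgroup that is simultaneously epimorphic and observable must coincide with $G$, since observability makes $G/H$ an open subvariety of its affine hull $\operatorname{Spec}\Bbbk[G/H]$, while epimorphicity gives $\Bbbk[G/H] = \Bbbk$, so this hull is a point and $G/H$ is a point, forcing $H = G$. As $\dim H = 13 < 360 = \dim G$, the group $H$ is not observable, hence not Grosshans.

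Case $\chi = \chi_0^N$, $N \ge 1$: now $\Ker\chi = \mu_N H_0$. By Grosshans' transfer principle a Grosshans subgroup $L \subseteq G$ has $\Bbbk[W]^L$ finitely generated for every $G$-module $W$; applied to $W = V$, this reduces the case to proving that $\Bbbk[V]^{\mu_N H_0}$ is not finitely generated. Put $R = \Bbbk[V]^{H_0}$, a normal domain graded by the $S$-weight, $R = \bigoplus_{d \in \mb Z} R_d$; then $\Bbbk[V]^{\mu_N H_0} = R^{\mu_N} = R^{(N)} := \bigoplus_{N \mid d} R_d$, the $N$-th Veronese subalgebra. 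For $N = 1$ this is $R$ itself, which is not finitely generated by A'Campo-Neuen's theorem, so $H_0$ is not Grosshans. The remaining task, and the point where the real work lies, is to show that non-finite-generation persists for every $N \ge 2$.

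For this I would show that finite generation of $R^{(N)}$ would force finite generation of $R$, contradicting A'Campo-Neuen. Each homogeneous $r \in R_d$ satisfies $r^N \in R_{Nd} \subseteq R^{(N)}$, so $R$ is integral over $R^{(N)}$ and $\operatorname{Frac}(R)$ is algebraic over $\operatorname{Frac}(R^{(N)})$; being a subfield of $\Bbbk(x_1,\dots,x_{19})$ it is finitely generated over $\Bbbk$, hence finite over $\operatorname{Frac}(R^{(N)})$. Since $R$ is normal it is integrally closed in $\operatorname{Frac}(R)$, and being integral over $R^{(N)}$ it coincides with the integral closure of $R^{(N)}$ in the finite extension $\operatorname{Frac}(R) \supseteq \operatorname{Frac}(R^{(N)})$. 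If $R^{(N)}$ were finitely generated over $\Bbbk$, the finiteness theorem for integral closures would make $R$ a finite $R^{(N)}$-module, hence finitely generated over $\Bbbk$, a contradiction. Therefore $R^{(N)} = \Bbbk[V]^{\mu_N H_0}$ is not finitely generated and $\mu_N H_0$ is not Grosshans. With both cases settled, condition~(b) of Theorem~\ref{criterion} fails for every $\chi$, so $G/H$ admits no projective embedding with small boundary.
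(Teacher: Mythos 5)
Your proof is correct and follows essentially the same route as the paper: the paper reduces the corollary to Theorem~\ref{criterion} via Proposition~\ref{proposition}, whose proof invokes the argument of \cite[Lemma~2.2]{Ar} --- namely, that every character of $H$ kills the unipotent part $H_0$, that $\Ker\chi = H$ is excluded because a proper epimorphic subgroup cannot be observable, and that $\Ker\chi = \mu_N H_0$ is excluded via Grosshans' transfer principle together with the fact that $R = \Bbbk[V]^{H_0}$ is the integral closure of the Veronese subalgebra $R^{(N)}$, so finite generation would ascend to $R$ and contradict A'Campo-Neuen. You have simply written out in full the details that the paper outsources to the citation.
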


This corollary follows from Theorem~\ref{main_theorem} and the
following proposition.

\begin{proposition} \label{proposition}
Suppose we are given a linear action of a unipotent group $H_0$ on a
vector space $V$ over $\Bbbk$ such that the algebra $\Bbbk[V]^{H_0}$
is not finitely generated. Suppose that a connected reductive
subgroup $G \subset \GL(V)$ containing $H_0$ is fixed and there is a
one-dimensional torus $S \subset G$ normalizing $H_0$ such that the
group $H = S H_0$ is epimorphic in $G$. Then the homogeneous space
$G/H$ admits no projective embeddings with small boundary.
\end{proposition}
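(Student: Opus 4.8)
The plan is to apply Theorem~\ref{criterion} to the epimorphic subgroup $H = SH_0$ and to show that condition~(b) fails, that is, that for \emph{every} character $\chi$ of $H$ the subgroup $\Ker\chi$ fails to be a Grosshans subgroup; by the stated equivalence this rules out any projective embedding of $G/H$ with small boundary. So the first step is to describe the character group of $H$. Since $H_0$ is unipotent, every character of $H$ restricts trivially to $H_0$ (a homomorphism from a unipotent group to the torus $\Bbbk^\times$ is trivial), and since $S \cap H_0 = \{e\}$ we have $H/H_0 \cong S \cong \Bbbk^\times$. Hence the characters of $H$ are exactly the powers $\chi_n$, $n \in \mb Z$, of a generator, and $\Ker\chi_0 = H$, while for $n \ne 0$ one gets $\Ker\chi_n = \Gamma_n H_0$, where $\Gamma_n = \{s \in S : s^n = 1\}$ is finite. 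Thus each $\Ker\chi_n$ with $n \ne 0$ is a finite extension of $H_0$ with identity component $H_0$.

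For the nontrivial characters ($n \ne 0$) I would show directly that $\Bbbk[G/\Ker\chi_n]$ is not finitely generated, which already prevents $\Ker\chi_n$ from being Grosshans. The engine is Grosshans' transfer principle: for reductive $G$ and any subgroup $F \subset G$ there is an isomorphism $\Bbbk[V]^F \cong (\Bbbk[V] \otimes \Bbbk[G/F])^G$, so if $\Bbbk[G/F]$ were finitely generated then, $G$ being reductive, $\Bbbk[V]^F$ would be finitely generated too. Applying this with $F = H_0$ and using the hypothesis that $\Bbbk[V]^{H_0}$ is not finitely generated, I conclude that $\Bbbk[G/H_0]$ is not finitely generated. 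It then remains to propagate this from $H_0$ to the finite extension $\Ker\chi_n$: the finite group $\Gamma_n \cong \Ker\chi_n/H_0$ acts by right translation on the integral domain $\Bbbk[G/H_0] = \Bbbk[G]^{H_0}$ with $\Bbbk[G/H_0]^{\Gamma_n} = \Bbbk[G/\Ker\chi_n]$, and in characteristic zero a trace-form argument shows that if this invariant ring were finitely generated then $\Bbbk[G/H_0]$ would be a finite module over it, hence finitely generated — a contradiction. Therefore $\Bbbk[G/\Ker\chi_n]$ is not finitely generated and $\Ker\chi_n$ is not Grosshans.

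The trivial character $\chi_0$ must be handled separately, since $\Ker\chi_0 = H$ is epimorphic and so has $\Bbbk[G/H] = \Bbbk$ finitely generated; here the obstruction is observability. I would first note that $H \ne G$: the group $H = S \ltimes H_0$ is solvable, so $H = G$ would make the reductive group $G$ solvable, i.e.\ a torus, forcing $H_0 = \{e\}$ and contradicting the non–finite generation of $\Bbbk[V]^{H_0}$. If $H$ were observable, then $G/H$ would be quasi-affine with $\Bbbk[G/H] = \Bbbk$; but a quasi-affine variety whose only regular functions are constants is a single point, which would give $H = G$. Hence $H$ is not observable and therefore not Grosshans. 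Combining the two cases, no character of $H$ has Grosshans kernel, and Theorem~\ref{criterion} gives the claim.

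I expect the main obstacle to be the passage to the finite extensions $\Ker\chi_n$: one has to know that taking invariants under the finite group $\Gamma_n$ cannot repair non–finite generation, which is precisely the trace-form/module-finiteness argument valid in characteristic zero, and one must be careful to apply the transfer principle and the identification $\Bbbk[G/\Ker\chi_n] = \Bbbk[G/H_0]^{\Gamma_n}$ to the correct (in general only quasi-affine) homogeneous spaces.
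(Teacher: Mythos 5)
Your proposal is correct and follows essentially the same route as the paper: the paper's own proof is just a deferral to \cite[Lemma~2.2]{Ar}, whose argument runs exactly along your lines --- every character of $H$ kills the unipotent part $H_0$, the kernel of a nontrivial character is a finite (cyclic) extension $\Gamma_n H_0$ whose algebra of invariants cannot be finitely generated by Grosshans' transfer principle combined with the characteristic-zero integrality/finite-module argument for finite group actions, and the trivial character's kernel $H$ itself is epimorphic, hence not observable unless $H = G$, which is excluded. In effect you have written out in full the details that the paper leaves to the citation, including the one genuinely delicate step (that $\Bbbk[G/H_0]$ is a finite module over $\Bbbk[G/H_0]^{\Gamma_n}$ once the latter is finitely generated, via $[\operatorname{Frac}(A):\operatorname{Frac}(A^{\Gamma_n})]\le|\Gamma_n|$ and Noether's finiteness of integral closure), which you handle correctly.
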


\begin{proof}
It suffices to show that condition~(b) of Theorem~\ref{criterion}
does not hold. This is done by the same argument as
in~\cite[Lemma~2.2]{Ar}.
\end{proof}

\section{Proof of Theorem~\ref{main_theorem}}

Before we prove Theorem~\ref{main_theorem}, let us fix some
notation.

We recall that $G = \SL_{19}$. Let $B$ (resp. $U$, $T$) be the Borel
subgroup (resp. the maximal unipotent subgroup, the maximal torus)
in $G$ consisting of all upper triangular (resp. upper
unitriangular, diagonal) matrices contained in $G$. Let $N_G(T)$
denote the normalizer of $T$ in~$G$, which consists of all monomial
matrices contained in~$G$. We denote by $\mf X(B)$ the weight
lattice of~$B$. The semigroup of dominant weights of $B$ is denoted
by $\mf X_+(B)$, $\mf X_+(B) \subset \mf X(B)$. For $i = 1, 2,
\ldots, 18$ we denote by $\pi_i$ the $i$-th fundamental weight of
$B$, which takes every upper triangular matrix to the product of its
first $i$ diagonal entries.

The simple $G$-module with highest weight $\lambda \in \mf X_+(B)$
is denoted by $V(\lambda)$, and its highest weight vector with
respect to $B$ is denoted by $v_\lambda$. Let $P_\lambda \subset G$
be the subgroup that stabilizes the line $\langle v_\lambda\rangle
\subset V(\lambda)$. This subgroup is a parabolic subgroup
containing the Borel subgroup~$B$. We identify the weight lattice
$\mf X(P_\lambda)$ of $P_\lambda$ with a sublattice of $\mf X(B)$ by
means of the natural embedding $B \hookrightarrow P_\lambda$.

Every dominant weight $\lambda$ of $B$ has the form $\lambda = a_1
\pi_1 + a_2 \pi_2 + \ldots + a_{18} \pi_{18}$ for some non-negative
integers $a_1, a_2, \ldots, a_{18}$. If $a_i > 0$ for some $i \in
\{1, 2, \ldots, 18\}$ then $P_\lambda$ stabilizes the line $\langle
v_{\pi_i}\rangle \subset V(\pi_i)$. At that, $P_\lambda$ acts on
$v_{\pi_i}$ by multiplication by the weight $\pi_i$. This weight
takes every matrix $A \in P_\lambda$ to the minor corresponding to
the first $i$ and last $i$ rows and columns of~$A$. (The lower left
$(19 - i) \times i$ block of $A$ consists of zero entries.)

In this section, we identify elements $s\in\Bbbk^\times$ and their
images in $S$, see~(\ref{torus}).

We now proceed to prove Theorem~\ref{main_theorem}.
By~\cite[Lemma~23.5]{Gr} it suffices to show that there are no
proper observable subgroups of $G$ containing~$H$. In view
of~\cite[Lemma~7.7]{Gr} the proof will be completed if we check the
following two conditions:

(1) for every non-trivial simple $G$-module $V(\lambda)$ and every
Borel subgroup $\widetilde B \subset G$ the highest weight vector of
$V(\lambda)$ with respect to $\widetilde B$ is not invariant under
$H$;

(2) there are no proper reductive subgroups of $G$ containing $H$.

Condition~(1) follows from Lemma~\ref{highest_weight_vectors}.
Condition~(2) will be checked using Lemma~\ref{reductive_subgroups}.
We now turn to formulate and prove the lemmas.

\begin{lemma}\label{highest_weight_vectors}
Let $\widetilde B\subset G$ be an arbitrary Borel subgroup and
$V(\lambda)$, $\lambda \ne 0$, an arbitrary simple $G$-module with
highest weight vector $\widetilde v_\lambda$ with respect to
$\widetilde B$. Then there is an element $h\in H$ such that $h \cdot
\widetilde v_\lambda \ne \widetilde v_\lambda$.
\end{lemma}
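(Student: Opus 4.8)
The plan is to reformulate the statement in terms of $H$-stable subspaces of $V$. Write $\widetilde B=gBg^{-1}$ for some $g\in G$, so that, up to a scalar, $\widetilde v_\lambda=g v_\lambda$ and the stabilizer of the line $\langle\widetilde v_\lambda\rangle$ is the parabolic $\widetilde P_\lambda=gP_\lambda g^{-1}$; write $\lambda=a_1\pi_1+\dots+a_{18}\pi_{18}\ne0$. First I would note that $H$ fixes the vector $\widetilde v_\lambda$ if and only if $H\subseteq\widetilde P_\lambda$ and the character by which $\widetilde P_\lambda$ acts on $\langle\widetilde v_\lambda\rangle$ restricts trivially to $H$. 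The subgroup $\widetilde P_\lambda$ stabilizes the subspaces $\widetilde F_i=g\langle e_1,\dots,e_i\rangle$ for all $i$ with $a_i>0$, so the first condition means precisely that each such $\widetilde F_i$ is $H$-stable; on the line $\Lambda^i\widetilde F_i\subset V(\pi_i)$ the group $H$ acts through the character $\chi_{W}\colon h\mapsto\det(h|_{W})$ with $W=\widetilde F_i$, and the character on $\langle\widetilde v_\lambda\rangle$ equals $\sum_i a_i\chi_{\widetilde F_i}$. Because $H_0$ is unipotent it lies in the kernel of every character of $H$, and $H/H_0\cong S$; hence every character of $H$ is determined by its restriction to $S$, which I identify with an integer. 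Thus everything reduces to the following claim: for every $H$-stable subspace $W\subseteq V$ with $1\le\dim W\le18$ one has $\chi_W|_S<0$. Granting this, each $\chi_{\widetilde F_i}|_S$ is strictly negative, so $\sum_i a_i\chi_{\widetilde F_i}|_S<0$ and $H$ cannot fix $\widetilde v_\lambda$.

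To prove the claim I would first describe the $H$-stable subspaces. Put $V_+=\langle e_1,\dots,e_4\rangle$ and $V_-=\langle e_5,\dots,e_{19}\rangle$, the $S$-weight spaces of weights $15$ and $-4$; from the matrix form one sees that $H_0$ acts trivially on $V_-$ and sends each $e_j$ ($j\le4$) to $e_j$ plus the $j$-th column of $M(\overline\mu)$ regarded as a vector of $V_-$. Since $S\subseteq H$, any $H$-stable $W$ is $S$-stable, so $W=W_+\oplus W_-$ with $W_\pm=W\cap V_\pm$, and $\chi_W|_S$ is the integer $15\dim W_+-4\dim W_-$. The $H_0$-stability of $W$ amounts to $\mathcal M(W_+)\subseteq W_-$, where $\mathcal M(W_+)$ denotes the linear span of all vectors $M(\overline\mu)w$ with $\overline\mu\in\Bbbk^{12}$ and $w\in W_+$; in particular $\dim W_-\ge\dim\mathcal M(W_+)$. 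Hence the claim follows once I establish the bound $4\dim\mathcal M(W_+)>15\dim W_+$ for $\dim W_+\in\{1,2,3\}$ together with $\mathcal M(V_+)=V_-$. Indeed, the equality forces $W=V$ whenever $W_+=V_+$ (which is excluded), the inequality gives $4\dim W_-\ge4\dim\mathcal M(W_+)>15\dim W_+$, and the case $W_+=0$ yields $\chi_W|_S=-4\dim W_-<0$ at once.

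The main obstacle is exactly the dimension bound $\dim\mathcal M(W_+)\ge4\dim W_+$, which must hold for all subspaces of dimension $1,2,3$: the dimension of $\mathcal M(W_+)$ can drop on special $W_+$, and the Grassmannian is infinite, so the bound cannot be checked on coordinate subspaces alone. I would exploit the block structure of $M(\overline\mu)$ in (\ref{M}): its columns $2,3,4$ have pairwise disjoint row-supports $\{1,2,3,10,11\}$, $\{4,5,6,12,13\}$, $\{7,8,9,14,15\}$, while column $1$ meets every block. The strategy is to stratify $W_+$ according to whether the $e_1$-coordinate is active on it. When it is, column $1$ already contributes at least $11$ rows, far exceeding the requirement. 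When it is not, $W_+\subseteq\langle e_2,e_3,e_4\rangle$ and the generators decouple: isolating the $\mu_0$-contribution, which is a faithful copy of $W_+$ supported on rows $\{1,4,7,10\}$, from the remaining independent generators lying in the three disjoint blocks, I would compute $\dim\mathcal M(W_+)$ and verify that over the whole Grassmannian the minimum is attained at the coordinate subspaces $\langle e_2\rangle$, $\langle e_2,e_3\rangle$, $\langle e_2,e_3,e_4\rangle$, where it equals $4$, $9$, $14$; each of these exceeds $\frac{15}{4}\dim W_+$. The equality $\mathcal M(V_+)=V_-$ is the same computation for $W_+=V_+$, all fifteen rows then being reached. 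These are finite linear-algebra verifications, and the delicate point is the bookkeeping showing that no non-coordinate subspace does better than the coordinate ones.
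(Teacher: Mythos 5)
Your proposal is correct, but it takes a genuinely different route from the paper. The paper's proof passes through the Bruhat decomposition $g_0=u\sigma b$ of the conjugating element: it moves $H$ into $P_\lambda$, extracts from the $S$-action a pair $(k^*,l^*)$ with $l^*<15k^*/4$, then constructs one carefully chosen element $h_0\in H_0$ (choosing $\mu_{11},\mu_{10},\dots,\mu_1$ successively) and reaches a contradiction by counting non-zero rows of $15\times n$ submatrices of the block $D(h_0)=Q^{-1}M(\overline\mu)P$, via the bounds $Z(1)\ge4$, $Z(2)\ge8$, $Z(3)\ge12$, $Z(4)=15$. You avoid the Bruhat decomposition and the choice of a special element altogether: fixing $\widetilde v_\lambda$ forces each flag subspace $\widetilde F_i$ (for $a_i>0$) to be $H$-stable, and you prove the stronger assertion that \emph{every} $H$-stable subspace $W$ with $1\le\dim W\le 18$ has strictly negative determinant character on $S$, i.e. $15\dim W_+-4\dim W_-<0$; thus the paper's sign argument (all $b_j\ne 0$, $\sum a_jb_j=0$, hence some $b_{j_0}>0$) is replaced by outright negativity of every term. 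Your required bounds $\dim\mathcal{M}(W_+)\ge 4,8,12,15$ for $\dim W_+=1,2,3,4$ are precisely the paper's $Z(n)$ estimates in invariant form, and your route has the bonus of reusing the map $\varphi_w$ and the data of Table~\ref{tabl}, which the paper introduces only later for Lemma~\ref{reductive_subgroups}, so the two lemmas get unified. The trade-off: the paper's conjugation trick reduces everything to one explicit matrix pattern, whereas you must control the minimum of $\dim\mathcal{M}(W_+)$ over the whole Grassmannian --- still a finite check, via your dichotomy on whether $W_+$ meets the locus where the $e_1$-coordinate is non-zero.

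Two slips in your final bookkeeping, neither fatal. First, the minimum of $\dim\mathcal{M}(W_+)$ over three-dimensional $W_+$ is not $14$ and is not attained at $\langle e_2,e_3,e_4\rangle$: for $W_+=\langle e_1,e_2,e_3\rangle$ one gets $\dim\mathcal{M}(W_+)=13$, since $\Im\varphi_{e_1}$ is $11$-dimensional, $\Im\varphi_{e_2}$ adds only $e_7$, and $\Im\varphi_{e_3}$ adds only $e_{10}$. Second, your remark that an active $e_1$-coordinate contributes ``at least $11$ rows, far exceeding the requirement'' fails exactly in the three-dimensional case, where the threshold is $12>11$. The repair is easy: any $w$ with non-zero first coordinate \emph{and} some other non-zero coordinate has $\Ker\varphi_w=0$, hence $\dim\Im\varphi_w=12$ (as in Table~\ref{tabl}), and every three-dimensional $W_+\ne\langle e_2,e_3,e_4\rangle$ contains such a vector (if the only vectors of $W_+$ with non-zero first coordinate were multiples of $e_1$, take $e_1+u$ with $0\ne u\in W_+\cap\langle e_2,e_3,e_4\rangle$). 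So $\dim\mathcal{M}(W_+)\ge 12>45/4$ in all three-dimensional cases, and your argument closes as intended.
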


\begin{proof}
Assume that $h \cdot \widetilde v_\lambda = \widetilde v_\lambda$
for all $h \in H$. Since $\lambda \ne 0$, we have $\lambda = a_1
\pi_{i_1} + a_2 \pi_{i_2} + \ldots + a_m \pi_{i_m}$, where $1 \le m
\le 18$, $1 \le i_1 < i_2 < \ldots < i_m \le 18$, and $a_i
> 0$ for all $i = 1, \ldots, m$. The subsequent argument is divided
into several steps.

\emph{Step}~1. Since all Borel subgroups in $G$ are conjugated,
there exists an element $g_0\in G$ such that $\widetilde B = g_0
Bg_0^{-1}$. Then $\widetilde v_\lambda = \alpha g_0 \cdot v_\lambda$
for some $\alpha \ne 0$, whence
\begin{equation}\label{reformulation1}
g_0^{-1}hg_0\cdot v_\lambda = v_\lambda
\end{equation}
for all $h\in H$. Consider the Bruhat decomposition of $g_0$:
\begin{equation}\label{Bruhat}
g_0 = u \sigma b,
\end{equation}
where $u\in U$, $\sigma\in N_G(T)$, $b\in B$ are some fixed
elements. We may assume that $\sigma = \varepsilon \sigma_0$, where
$\sigma_0$ is a permutation matrix, $\varepsilon = 1$ for $\det
\sigma_0 = 1$, and $\varepsilon = e^{\pi \sqrt{-1}/19}$ for $\det
\sigma_0 = -1$. We now substitute expression~(\ref{Bruhat}) for
$g_0$ in~(\ref{reformulation1}). Since $b$ multiplies $v_\lambda$ by
a scalar, we have
\begin{equation}\label{reformulation2}
\sigma^{-1} u^{-1} hu \sigma \cdot v_\lambda = v_\lambda
\end{equation}
for all $h\in H$. Let $\tau \colon G \to G$ be the map given by the
formula $\tau(g) =\sigma^{-1}u^{-1}gu \sigma$. Taking into
account~(\ref{reformulation2}) we obtain $\tau(H) \subset
P_\lambda$.

\emph{Step}~2. Let $\nu$ be the permutation of the set $\{1, 2,
\ldots, 19\}$ that corresponds to $\sigma$. Then $\sigma(e_j) =
\varepsilon e_{\nu(j)}$ for $j = 1, \ldots, 19$. For each pair of
matrices $g = (g_{ij}) \in G$, $\overline g = \sigma^{-1} g \sigma$
we have $\overline g_{ij} = g_{\nu(i),\nu(j)}$ for $i,j = \{1,
\ldots, 19\}$. In particular, $\overline g_{jj} = g_{\nu(j),
\nu(j)}$ for $j = 1, \ldots, 19$. We note that under the map $g
\mapsto \overline g$ entries of $g$ lying in the same row (resp.
column) are transformed into elements of $\overline g$ that also lie
in the same row (resp. column).

\emph{Step}~3. Suppose $s \in S$. Then $u^{-1}su$ is an upper
triangular matrix whose diagonal entries are $s^{15}$, $s^{15}$,
$s^{15}$, $s^{15}$, $s^{-4}, \ldots, s^{-4}$. Further, the diagonal
entries of the matrix $\tau(s) = \sigma^{-1}u^{-1}su\sigma$ are
again $s^{15}$, $s^{15}$, $s^{15}$, $s^{15}$, $s^{-4}, \ldots,
s^{-4}$, perhaps in another order. At that, for every $i = 1,
\ldots, 18$ the determinant of the upper left $i \times i$ block of
$\tau(s)$ is equal to the product of all diagonal entries of this
block. Therefore, for every $j = 1, \ldots, m$ we have
$\pi_{i_j}(\tau(s)) = s^{b_j}$ for some $b_j \in \mb Z$. Moreover,
$b_j = 15k_j - 4l_j$, where
$$
k_j = \#\{k \in \{1, 2, \ldots,
i_j\}\mid \nu(k) \in \{1,2,3,4\}\}
$$
and
$$
l_j = \#\{k \in \{1, 2, \ldots, i_j\}\mid \nu(k) \notin
\{1,2,3,4\}\}.
$$
Clearly, $0 \le k_j \le 4$, $0 \le l_j \le 15$, and $k_j + l_j =
i_j$. The last equality implies that $(k_j, l_j) \notin \{(0, 0),
(4, 15)\}$, whence $b_j \ne 0$ for all $j = 1, \ldots, m$. Further,
the condition $\tau(s) \cdot v_\lambda = v_\lambda$ implies that
$\lambda(\tau(s)) = 1$ for all $s \in S$ and $a_1b_1 + a_2b_2 +
\ldots + a_mb_m = 0$. We conclude that there exists $j_0 \in \{1,
\ldots, m\}$ with $b_{j_0} > 0$. Put $i^* = i_{j_0}$, $k^* =
k_{j_0}$, and $l^* = l_{j_0}$. Since $b_{j_0} > 0$, we have $l^* <
15k^*/4$, in particular, $k^* > 0$. Obviously, for every matrix in
$\tau(H)$ its lower left $(19 - i^*) \times i^*$ block consists of
zero entries.

\emph{Step}~4. Suppose that $u = \begin{pmatrix} P & R \\ 0 & Q
\end{pmatrix}$ and $u^{-1} =
\begin{pmatrix} P^{-1} & R' \\ 0 & Q^{-1} \end{pmatrix}$, where $P$
and $Q$ are upper unitriangular matrices of order $4$ and $15$,
respectively, $R$ and $R'$ are $4 \times 15$ matrices, $R' =
-P^{-1}RQ^{-1}$. Let $h = h(\overline \mu) \in H_0$ be an arbitrary
element. Recall that $h = \begin{pmatrix} E_4 & 0 \\ M(\overline
\mu) & E_{15} \end{pmatrix}$ for some $\overline \mu \in
\Bbbk^{12}$, where $M(\overline \mu)$ is the matrix in~(\ref{M}).
Then
\begin{equation*}
u^{-1}hu =
\begin{pmatrix}
E_4 + R'M(\overline \mu)P & P^{-1}R + R'M(\overline \mu)R + R'Q\\
Q^{-1}M(\overline \mu)P & E_{15} + Q^{-1}M(\overline \mu)R
\end{pmatrix}.
\end{equation*}
We consider the $15 \times 4$ matrix $D = D(h) = Q^{-1}M(\overline
\mu)P$. Note that for every entry $d_{pq}$ of $D$ we have $d_{pq} =
m_{pq} + \sum c_{ij}m_{ij}$ where the sum is taken over all pairs
$(i, j)$ with $i \ge p$, $j \le q$, and $(i, j) \ne (p, q)$, the
coefficients $c_{ij}$ being uniquely determined by the matrix $u$.
Now, using the latter observation and the explicit form~(\ref{M}) of
the matrix $M(\overline \mu)$, we can successively choose elements
$\mu_{11}$, $\mu_{10}$, $\mu_9$, $\mu_8$, $\mu_7$, $\mu_0$, $\mu_6$,
$\mu_5$, $\mu_4$, $\mu_3$, $\mu_2$, $\mu_1 \in \Bbbk$ in such a way
that, for the corresponding element $h_0 \in H_0$, the submatrix
$D(h_0)$ of $u^{-1}h_0u$ has the form
\begin{equation}\label{matrix_D}
D(h_0) = \begin{pmatrix}
* & \diamond & \diamond & \diamond \\
* & * & \diamond & \diamond\\
\diamond & * & \diamond & \diamond \\
* & \diamond & \diamond & \diamond \\
* & \diamond & * & \diamond \\
\diamond & \diamond & * & \diamond \\
* & \diamond & \diamond & \diamond \\
* & \diamond & \diamond & * \\
\diamond & \diamond & \diamond & * \\
* & * & \diamond & \diamond \\
* & * & \diamond & \diamond \\
* & \diamond & * & \diamond \\
* & \diamond & * & \diamond \\
* & \diamond & \diamond & * \\
\diamond & \diamond & \diamond & *
\end{pmatrix},
\end{equation}
where the asterisks stand for non-zero entries and the diamonds
stand for the entries that are irrelevant for us.

\emph{Step}~5. We now turn to the element $h_0 \in H_0$ and the
corresponding matrix $D(h_0)$ found at the previous step. For $n =
1, 2, 3, 4$ we define numbers $Z(n)$ as follows. We consider all $15
\times n$ submatrices of $D(h_0)$. For each of them, we count the
number of non-zero rows. At last, we put $Z(n)$ to be the minimal
among the obtained values. Using the explicit form~(\ref{matrix_D})
of $D(h_0)$, we find that $Z(1) \ge 4$, $Z(2) \ge 8$, $Z(3) \ge 12$,
$Z(4) = 15$.

\emph{Step}~6. For $j = 1, \ldots, k^*$ we define (pairwise
distinct) numbers $n_1, \ldots, n_{k^*} \in \{1, \ldots, i^*\}$ by
the condition $\nu(n_j) = j$. The column $j$ of the matrix
$u^{-1}h_0u$ is obtained by applying the permutation $\nu$ to the
column $n_j$ of the matrix $\tau(h_0) =
\sigma^{-1}(u^{-1}h_0u)\sigma$ ($j = 1, \ldots, k^*$). Therefore,
none of the elements of $D(h_0)$ is such that its image under the
transformation $u^{-1}h_0u \mapsto \sigma^{-1}(u^{-1}h_0u)\sigma$ is
contained in one of the rows $n_1, \ldots, n_{k^*}$. Since the lower
left $(19 - i^*) \times i^*$ block of $\tau(h_0)$ is zero
(\emph{Step}~3), it follows that there is a $15 \times k^*$
submatrix of $D(h_0)$ whose number of non-zero rows is at most $i^*
- k^* = l^* < 15k^*/4$.

\emph{Step}~7. Comparing the results of the previous step with the
definition of the numbers $Z(n)$ (\emph{Step}~5) we get the
following inequality:
\begin{equation} \label{inequality}
Z(k^*) < 15k^*/4.
\end{equation}
Making use of the estimations of $Z(n)$ obtained at \emph{Step}~5,
we find that none of the possible values $k^* = 1, 2, 3, 4$
satisfies~(\ref{inequality}). This contradiction completes the proof
of the lemma.
\end{proof}

Thus condition~(1) is checked.

\begin{lemma}\label{reductive_subgroups}
Suppose that $F\subset G$ is a reductive subgroup containing $H$.
Then $F$ acts irreducibly on~$V$.
\end{lemma}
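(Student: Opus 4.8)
The plan is to deduce irreducibility from complete reducibility together with the special weight structure of $S$ and of $H_0$. Since $F$ is reductive and the ground field has characteristic zero, the $F$-module $V$ is semisimple; hence it suffices to show that $V$ has no proper nonzero $F$-invariant subspace, for if $U$ were one, its $F$-invariant complement $U'$ would give $V = U \oplus U'$ with both summands proper and nonzero. The torus $S$ acts on $V$ with exactly two weights, namely with weight $s^{15}$ on $W_1 = \langle e_1, e_2, e_3, e_4\rangle$ and with weight $s^{-4}$ on $W_2 = \langle e_5, \ldots, e_{19}\rangle$. As these weights are distinct, every $S$-invariant subspace is the direct sum of its intersections with $W_1$ and $W_2$. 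Applying this to $U$ and $U'$ and writing $A = U \cap W_1$, $B = U \cap W_2$, $A' = U' \cap W_1$, $B' = U' \cap W_2$, I obtain $W_1 = A \oplus A'$ and $W_2 = B \oplus B'$.

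First I would record what $H_0$-invariance contributes. For $w \in W_1$ one has $h(\overline\mu) \cdot w - w = M(\overline\mu)w \in W_2$, so $H_0$-invariance of $U$ forces $M(\overline\mu)A \subseteq B$ for all $\overline\mu$, and likewise $M(\overline\mu)A' \subseteq B'$. For a subspace $A \subseteq W_1$ write $\mathcal{M}(A)$ for the linear span of all vectors $M(\overline\mu)a$ with $\overline\mu \in \Bbbk^{12}$ and $a \in A$; then the above reads $\mathcal{M}(A) \subseteq B$ and $\mathcal{M}(A') \subseteq B'$. Since $B \cap B' = 0$, this yields $\dim\mathcal{M}(A) + \dim\mathcal{M}(A') \le \dim W_2 = 15$.

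The crux is the combinatorial estimate $\dim\mathcal{M}(A) \ge 4\dim A$ for every subspace $A \subseteq W_1$ with $\dim A \le 3$, together with the equality $\mathcal{M}(W_1) = W_2$. Granting these, the proof concludes quickly. If both $A$ and $A'$ are nonzero, then each is a proper subspace, so $\dim A, \dim A' \le 3$ and $\dim\mathcal{M}(A) + \dim\mathcal{M}(A') \ge 4\dim A + 4\dim A' = 16 > 15$, a contradiction. Hence one of them, say $A'$, is zero, so $A = W_1$ and $B \supseteq \mathcal{M}(W_1) = W_2$, forcing $U = V$ (and symmetrically $A = 0$ would force $U = 0$)---both impossible for a proper nonzero $U$. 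This rules out any proper nonzero invariant subspace and proves that $F$ acts irreducibly.

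The main obstacle is the estimate on $\dim\mathcal{M}(A)$, which must hold for the worst-case subspace of each dimension, not merely the generic one. I would establish it by the same kind of book-keeping on the explicit matrix~(\ref{M}) that produced the numbers $Z(n)$ in the proof of Lemma~\ref{highest_weight_vectors}: since $\mathcal{M}(A) = \sum_{a \in A} \mathcal{M}(\langle a\rangle)$ and $\mathcal{M}(\langle a\rangle)$ is the image of the linear map $\overline\mu \mapsto M(\overline\mu)a$, one reduces to computing, for the various coordinate patterns of $a$, which rows of $W_2$ are reached. Any $a$ with nonzero first coordinate already reaches eleven rows, so the minimum is attained on subspaces of $\langle e_2, e_3, e_4\rangle$; a direct inspection of the row supports of the columns of~(\ref{M}) then gives $\dim\mathcal{M}(A) \ge 4, 8, 12$ for $\dim A = 1, 2, 3$ and $\mathcal{M}(W_1) = W_2$. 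The only subtlety is the coupling of rows $1, 4, 7, 10$ through the parameter $\mu_0$, which is precisely what keeps $\dim\mathcal{M}(\langle e_2, e_3, e_4\rangle)$ below $15$ while still leaving it well above $12$.
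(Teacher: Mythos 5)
Your proposal is correct and takes essentially the same route as the paper: the paper likewise splits any $F$-invariant subspace along the two $S$-weight spaces $V_1=\langle e_1,\ldots,e_4\rangle$ and $V_2=\langle e_5,\ldots,e_{19}\rangle$, studies the linear maps $\overline\mu\mapsto M(\overline\mu)w$ (your $\mathcal{M}(A)$ is the sum of their images over $w\in A$), and reaches the same contradiction $16>15$ inside $V_2$. Your uniform estimate $\dim\mathcal{M}(A)\ge 4\dim A$ together with $\mathcal{M}(V_1)=V_2$ is just a repackaging of the paper's Table~\ref{tabl} and its three-case count ($8+8$, $12+4$, and the spanning check for $\dim W_1=4$), and it holds as claimed.
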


\begin{proof}
Since $F$ is reductive, its action on $V$ is completely reducible.
Therefore it suffices to show that $V$ contains no proper subspaces
invariant under~$F$.

Let $V_1\subset V$ be the subspace spanned by the vectors
$e_1,e_2,e_3,e_4$ and $V_2\subset V$ be the subspace spanned by the
vectors $e_5,e_6,\ldots, e_{19}$. Clearly, $V=V_1\oplus V_2$, $\dim
V_1=4$, and $\dim V_2=15$. We note that both of the subspaces
$V_1,V_2$ are invariant under the action of~$S$.

Suppose that $W\subset V$ is a subspace invariant under $F$ and
choose an arbitrary vector $w\in W$. Then $w=v_1+v_2$ for some
vectors $v_1\in V_1$ and $v_2\in V_2$. Acting on $w$ by the element
$(\sqrt{-1}, \sqrt{-1}, \sqrt{-1}, \sqrt{-1}, 1, 1, \ldots, 1)\in S
\subset H \subset F$, we obtain the vector $w' = \sqrt{-1}v_1+v_2$
that also lies in $W$. It follows that both vectors $v_1,v_2$ lie in
$W$. Therefore $W$ is the direct sum of its projections $W_1$ and
$W_2$ to the subspaces $V_1$ and $V_2$, respectively. Clearly, $W_1
= W \cap V_1$ and $W_2 = W \cap V_2$.

Let $w \in W_1$ be an arbitrary element. Then for every $h\in H_0$
we have $h \cdot w = w + v(w,h)$, where $v(w, h) \in W_2$. Regard
the set $H_0$ as a vector space. We define the map $\varphi_w \colon
H_0\to W_2$ by $\varphi_w(h)=v(w,h)$. In other words, for $w = a_1
e_1 + a_2 e_2 + a_3 e_3 + a_4 e_4$ and $\overline \mu \in
\Bbbk^{12}$ we have
\begin{equation}
\varphi_w(h(\overline \mu)) = M(\overline \mu)\begin{pmatrix}
a_1\\a_2\\a_3\\a_4\end{pmatrix}.
\end{equation}
Evidently, $\varphi_w$ is a linear map, therefore its image is a
subspace in $W_2$. Besides,
\begin{equation}
\dim\Ker\varphi_w + \dim\Im\varphi_w = \dim H_0 = 12.
\end{equation}
To find $\dim \Ker \varphi_w$ (and thereby $\dim \Im \varphi_w$), it
is sufficient to solve the linear system
\mbox{$\varphi_w(h(\overline \mu)) = 0$} in variables $\overline
\mu$. It is not hard to see that the dimension of the solution space
of this system depends only on the arrangement of non-zero
coordinates of $w$. The values of $\dim \Im \varphi_w$ for different
types of $w\in V_1$ are presented in Table~\ref{tabl}. In the first
row of this table $w$'s are written as column vectors, the asterisk
denotes a non-zero coordinate.{\sloppy

}

\renewcommand{\topfraction}{1}

\begin{table}[h]

\renewcommand{\tabcolsep}{0.3em}

\begin{center}
\caption{}\label{tabl}

\begin{tabular}{|c|c|c|c|c|c|c|c|c|c|c|c|c|c|c|c|c|}

\hline

$w$ &

{\begin{tabular}{c} $0$\\ $0$\\ $0$\\ $0$ \end{tabular}} &

{\begin{tabular}{c} $*$\\ $0$\\ $0$\\ $0$ \end{tabular}} &

{\begin{tabular}{c} $0$\\ $*$\\ $0$\\ $0$ \end{tabular}} &

{\begin{tabular}{c} $0$\\ $0$\\ $*$\\ $0$ \end{tabular}} &

{\begin{tabular}{c} $0$\\ $0$\\ $0$\\ $*$ \end{tabular}} &

{\begin{tabular}{c} $*$\\ $*$\\ $0$\\ $0$ \end{tabular}} &

{\begin{tabular}{c} $*$\\ $0$\\ $*$\\ $0$ \end{tabular}} &

{\begin{tabular}{c} $*$\\ $0$\\ $0$\\ $*$ \end{tabular}} &

{\begin{tabular}{c} $0$\\ $*$\\ $*$\\ $0$ \end{tabular}} &

{\begin{tabular}{c} $0$\\ $*$\\ $0$\\ $*$ \end{tabular}} &

{\begin{tabular}{c} $0$\\ $0$\\ $*$\\ $*$ \end{tabular}} &

{\begin{tabular}{c} $*$\\ $*$\\ $*$\\ $0$ \end{tabular}} &

{\begin{tabular}{c} $*$\\ $*$\\ $0$\\ $*$ \end{tabular}} &

{\begin{tabular}{c} $*$\\ $0$\\ $*$\\ $*$ \end{tabular}} &

{\begin{tabular}{c} $0$\\ $*$\\ $*$\\ $*$ \end{tabular}} &

{\begin{tabular}{c} $*$\\ $*$\\ $*$\\ $*$ \end{tabular}}\\

\hline

$\dim\Im\varphi_w$ & $0$ & $11$ & $4$ & $5$ & $5$ & $12$ & $12$ &
$12$ & $8$ & $8$ & $9$ & $12$ & $12$ & $12$ & $12$ & $12$\\

\hline

\end{tabular}

\end{center}

\end{table}

We now assume that $V = W \oplus W'$ for some proper subspaces $W,W'
\subset V$ invariant under $F$. Put $W_1 = W \cap V_1$, $W_2 = W
\cap V_2$, $W'_1 = W' \cap V_1$, and $W'_2 = W' \cap V_2$. It
follows from the above that $V_1 = W_1 \oplus W'_1$ and $V_2 = W_2
\oplus W'_2$. Without loss of generality we may assume that $\dim
W_1 \ge \dim W'_1$. Further we consider three possible cases.

\emph{Case}~1. $\dim W_1 = 2$, $\dim W'_1 = 2$. Then there are two
vectors $w_1 \in W_1$, $w'_1 \in W'_1$ such that each of them has at
least two non-zero coordinates. From Table~\ref{tabl} we find that
$\dim \Im \varphi_{w_1} \ge 8$ and $\dim \Im \varphi_{w'_1} \ge 8$.
On the other hand, $\Im \varphi_{w_1}, \Im \varphi_{w'_1} \subset
V_2$ and $\dim V_2=15$, whence the space $\Im \varphi_{w_1} \cap \Im
\varphi_{w'_1}$ has positive dimension. This is impossible because
$\Im \varphi_{w_1} \subset W_2$, $\Im \varphi_{w'_1} \subset W'_2$,
and $W_2 \cap W'_2 = \{0\}$.

\emph{Case}~2. $\dim W_1 = 3$, $\dim W'_1 = 1$. Then there are a
vector $w_1\in W_1$ with at least three non-zero coordinates and a
non-zero vector $w'_1\in W'_1$. From Table~\ref{tabl} we find that
$\dim \Im \varphi_{w_1} \ge 12$ и $\dim \Im \varphi_{w'_1} \ge 4$.
By the same reason as in \emph{Case}~1, the space $\Im \varphi_{w_1}
\cap \Im \varphi_{w'_1}$ has positive dimension, a contradiction.

\emph{Case}~3. $\dim W_1=4$, $\dim W'_1=0$. It is easy to see that
the space $\Im \varphi_{e_1}$ contains the vectors $e_5, e_6, e_8,
e_9, e_{11}, e_{12}, e_{14}, e_{15}, e_{16}, e_{17}, e_{18}$, the
space $\Im \varphi_{e_2}$ contains the vector $e_7$, the space $\Im
\varphi_{e_3}$ contains the vector $e_{10}$, and the space $\Im
\varphi_{e_4}$ contains the vectors $e_{13}, e_{19}$. Thus all the
basis vectors of $V$ lie in $W$, whence $W = V$ and $W' = 0$, a
contradiction.

In all the cases we have come to a contradiction, so the proof of
the lemma is completed.
\end{proof}

We now show that a reductive subgroup $F\subset G$ containing $H$
coincides with~$G$. First, we note that there are no non-trivial
bilinear forms on $V$ preserved by $F$ because this holds even for
$S$. Next, by Lemma~\ref{reductive_subgroups} the $F$-module $V$ is
simple. Therefore the center of $F$ is finite and $F$ is semisimple.
Moreover, $F$ is simple since otherwise the dimension of $V$ would
be a composite number, which is not the case (we have $\dim V=19$).
Obviously, the rank of $F$ is at least two. Further, $F$ contains
the unipotent subgroup $H_0$ of dimension $12$, whence $\dim F \ge 2
+ 2\cdot 12 = 26$.

Assume that $F \ne G$. Since there are no non-trivial bilinear forms
on $V$ preserved by $F$, it follows that $F$ can only be of type
$\SL_k$, $\Spin_{4l+2}$, or $\mathsf E_6$ (see~\cite[\S\,4.3]{OV}).
Further we consider these three cases separately. (In all the cases
below, our arguments rely on well-known facts from representation
theory of semisimple algebraic groups.)

1) $F$ is of type $\SL_k$. Clearly, $k \le 18$. Since $\dim F \ge
26$, we have $k \ge 6$. Every simple $\SL_k$-module $W$ with $\dim W
> k + 1$ has actually dimension at least $k(k-1)/2$, which is more
than $19$ for $k \ge 7$. It remains to consider the case $k = 6$.
Every simple $\SL_6$-module $W$ with $\dim W > 15$ has actually
dimension at least $20$.

2) $F$ is of type $\Spin_{4l + 2}$. Clearly, $l \le 4$. Since $\dim
F \ge 26$, we have $l \ge 2$. Every simple $\Spin_{18}$-module $W$
with $\dim W > 18$ has actually dimension at least $153$. Every
simple $\Spin_{14}$-module $W$ with $\dim W > 14$ has actually
dimension at least $64$. Every simple $\Spin_{10}$-module $W$ with
$\dim W > 16$ has actually dimension at least $45$.

3) $F$ is of type $\mathsf E_6$. Every non-trivial simple $\mathsf
E_6$-module has dimension at least $27$.

In all the cases we have obtained that $V$ is not a simple
$F$-module. This contradiction implies that $F = G = \SL_{19}$.

Thus, we have checked condition~(2). Theorem~\ref{main_theorem} is
proved.

\section*{Acknowledgements}

The author is grateful to I.\,V.~Arzhantsev for suggesting the
problem, his interest, and attention to the work.

\newpage


\begin{thebibliography}{99}

\setlength{\itemsep}{-\parsep}

\bibitem[A]{A}
A.\,A'Campo-Neuen, ``Note on a counterexample to Hilbert's
fourteenth problem given by P.~Roberts'', \emph{Indag. Math.},
\textbf{5}:3 (1994), 253--257.

\bibitem[Ar]{Ar}
I.\,V.~Arzhantsev, ``Projective embeddings of homogeneous spaces
with small boundary'', \emph{Izv. RAN. Ser. Mat.}, \textbf{73}:3
(2009), 5--22 (in Russian); English transl.: \emph{Izv. Math.},
\textbf{73}:3 (2009), 437--453; see also
\href{http://arxiv.org/abs/0801.1967}{\texttt{arXiv:math.AG/0801.1967}}

\bibitem[BBI]{BBI}
F.~Bien, A.~Borel, ``Sous-groupes \'epimorphiques de groupes
lin\'eaires alg\'ebriques~I'', \emph{C.~R.~Acad.~Sci. Paris S\'er.~I
Math.}, \textbf{315}:6 (1992), 649-653.

\bibitem[BBII]{BBII}
F.~Bien, A.~Borel, ``Sous-groupes \'epimorphiques de groupes
lin\'eaires alg\'ebriques~II'', \emph{C.~R.~Acad.~Sci. Paris
S\'er.~I Math.}, \textbf{315}:13 (1992), 1341--1346.

\bibitem[BBK]{BBK}
F.~Bien, A.~Borel, J.~Koll\'ar, ``Rationally connected homogeneous
spaces'', \emph{Invent. Math.}, \textbf{124} (1996), 103--127.

\bibitem[Fr]{Fr}
G.~Freudenburg, ``A linear counterexample to the Fourteenth Problem
of Hilbert in dimension eleven'', \emph{Proc. Amer. Math. Soc.},
\textbf{135} (2007), 51--57.

\bibitem[Gr]{Gr}
F.\,D.~Grosshans, \emph{Algebraic homogeneous spaces and invariant
theory}, Lecture Notes in Math., \textbf{1673}, Springer-Verlag,
Berlin, 1997.

\bibitem[N]{N}
M.~Nagata, ``On the 14th problem of Hilbert'', \emph{Amer. J.
Math.}, \textbf{81} (1959), 766-772.

\bibitem[OV]{OV}
A.\,L.~Onishchik, E.\,B.~Vinberg, \emph{Lie groups and algebraic
groups}, Springer Ser. Soviet Math., Springer-Verlag, Berlin, 1990.

\bibitem[R]{R}
P.~Roberts, ``An infinitely generated symbolic blow-up in a power
series ring and a new counterexample to Hilbert's fourteenth
problem'', \emph{J.~Algebra}, \textbf{132} (1990), 461-473.

\bibitem[St]{St}
R.~Steinberg, ``Nagata's example'', \emph{Algebraic groups and Lie
groups}, Austral. Math. Soc. Lect. Ser., \textbf{9}, Cambridge Univ.
Press, Cambridge, 1997, 375--384.

\end{thebibliography}
\end{document}